\documentclass[final,3p]{elsarticle}

\usepackage{amsmath,amsthm,amssymb,mathrsfs}
\usepackage{enumerate}
\usepackage{slashed}
\usepackage{txfonts,dsfont}
\usepackage{aliascnt}
\usepackage[colorlinks, linkcolor=red, citecolor=green,filecolor=magenta,urlcolor=cyan]{hyperref}

\allowdisplaybreaks

\makeatletter
\newcommand{\autorefcheckize}[1]{%
  \expandafter\let\csname @@\string#1\endcsname#1%
  \expandafter\DeclareRobustCommand\csname relax\string#1\endcsname[1]{%
    \csname @@\string#1\endcsname{##1}\wrtusdrf{##1}}%
  \expandafter\let\expandafter#1\csname relax\string#1\endcsname
}
\makeatother

\allowdisplaybreaks
\newcommand{\abs}[1]{\left\lvert#1\right\rvert}
\newcommand{\kh}[1]{\left(#1\right)}%
\newcommand{\zkh}[1]{\left[#1\right]}%
\newcommand{\dkh}[1]{\left\{#1\right\}}%
\newcommand{\jkh}[1]{\left\langle #1\right\rangle}
\newcommand{\Graph}{\operatorname{Graph}}

\theoremstyle{plain}
\newtheorem{theorem}{Theorem}[section]

\newaliascnt{lem}{theorem}

 \aliascntresetthe{lem}

\newaliascnt{cor}{theorem}

 \aliascntresetthe{cor}

\newaliascnt{prop}{theorem}
\newtheorem{prop}[prop]{Proposition}
 \aliascntresetthe{prop}

\theoremstyle{remark}
\newtheorem{rem}{Remark}[section]

\theoremstyle{definition}

\newtheorem{eg}{Example}[section]

\numberwithin{equation}{section}

\begin{document}

\begin{frontmatter}


\title{A strict upper volume bound for minimal graphs in the unit ball}

\author[cui]{Qing Cui}
\address[cui]{School of Mathematics,
Southwest Jiaotong University, 611756
Chengdu, Sichuan, China}
\ead{cuiqing@swjtu.edu.cn}

\date{\today ~ [120430]}

\begin{abstract}
Let $u$ be a solution of the minimal surface equation on a domain containing the closed unit ball $\overline{B^n}\subset\mathbb R^n$. A classical calibration argument gives $|\Graph_u\cap B^{n+1}| \leq
    \frac12\mathcal |\mathbb S^n|.$ A basic question is whether this half-sphere bound is sharp for minimal graphs. We show that it is not. More precisely, for every
$n\geq2$, there exists an explicit constant $\delta_n>0$, depending only on $n$, such that $ |\Graph_u\cap B^{n+1}|\leq \frac12 |\mathbb S^n|-\delta_n$.

The proof combines calibration with a canonical spherical filling associated with the subgraph and a quantitative incompatibility between near equality in the calibration estimate and the divergence-free structure of the
minimal surface equation. We also give an improved explicit gap and formulate the corresponding sharp extremal problem.
 \end{abstract}

\begin{keyword} minimal graph \sep minimal surface equation \sep calibration \sep upper volume bound \sep quantitative gap

\par
\MSC[2020]   53A10\sep 53C24
\end{keyword}

\end{frontmatter}

\section{Introduction}

Volume estimates are among the most basic quantitative questions in the theory of minimal submanifolds. On the lower-bound side, the monotonicity formula (see e.g. \cite{All72, Sim84, ColMin11}) implies that if a $k$-dimensional minimal submanifold $\Sigma^k\subset B^N$ passes through the origin and
$\partial\Sigma\subset\partial B^N$, then  $|\Sigma|\geq |B^k|.$ Here and in what follows, $B_r^n$ denotes the $n$-dimensional ball of radius $r$  centered at the origin in $\mathbb{R}^n$, and for simplicity, we write the $n$-dimensional unit ball $B^n:=B^n_1$, and the $(n-1)$-dimensional unit sphere $\mathbb S^{n-1}:=\partial B^n$

More refined sharp lower-volume problems have been studied extensively. For instance, Alexander and Osserman  \cite{AleOss75} considered minimal surfaces passing through a prescribed point in the unit ball, and their conjectured sharp lower bound was later established in greater generality by Brendle and Hung \cite{BreHun17}. Sharp lower bounds are also known in several free-boundary and capillary settings; see, for example,
\cite{FraSch11, Bre12, WanXiaZha25} and the references therein.

The situation for upper bounds is quite different. In general, even rather restrictive classes of minimal submanifolds in a fixed ball need not admit a universal upper-volume bound (see \cite{CarSchWiy25}). For minimal graphs, however, calibration provides a distinguished global structure and leads to a universal estimate. Let $\Omega\subset\mathbb R^n$ and let
$u:\Omega\to\mathbb R$ solve the minimal surface equation
\begin{equation}\label{MSE}
    \operatorname{div}\kh{ \frac{\nabla u}{\sqrt{1+|\nabla u|^2}}}=0.
\end{equation}
If $B_r^n\subset\Omega$, the standard calibration argument ({\cite[\S 5.4.18]{Fed69}}, {\cite[Corollary 1.2]{ColMin11}}) gives
\begin{equation}\label{eq:classical-upper}
\abs{\Graph_u\cap B_r^{n+1}}\leq \frac12\abs{\mathbb S^n}\,r^n.
\end{equation}
In particular, an entire minimal graph has Euclidean volume growth, a fact that plays an important role in the classical Bernstein theory.

Estimate \eqref{eq:classical-upper} raises a basic extremal question: is the constant $\frac12\abs{\mathbb S^n}$
sharp within the class of minimal graphs? Equivalently, can one find minimal graphs whose portions inside the unit ball have volume arbitrarily close to that of a hemisphere?

There is a reason why this question is not answered by calibration alone. The half-sphere bound is obtained by comparing the graph with a spherical filling of its boundary. Such an argument uses the area-minimizing property of the graph, but it does not fully exploit
the differential constraint imposed by the minimal surface equation. Writing
\begin{equation*}
q:=\frac{\nabla u}{\sqrt{1+|\nabla u|^2}}
\end{equation*}
equation \eqref{MSE} becomes
\begin{equation}\label{divq}
\operatorname{div}q=0,  \quad |q|<1.
\end{equation}
As we shall see, this divergence-free condition produces a genuine quantitative obstruction to approaching the half-sphere bound.

Our main result shows that the classical upper bound is not merely non-sharp: there is a uniform positive gap depending only on the dimension.

\begin{theorem}\label{thm:main}
Let $n\geq2$, let $\Omega\subset\mathbb R^n$ be open with $\overline{B^n}\subset\Omega$, and let
\(u\in C^2(\Omega)\) satisfy \eqref{MSE}. Then
\begin{equation}\label{eq:main-gap}
\abs{\Graph_u\cap B^{n+1}} \leq \frac12\abs{\mathbb S^n}-\delta_n,
\end{equation}
where $\delta_n=\tau_n^3>0$ and $\tau_n$ is the unique positive root of
\begin{equation}\label{eq:def-taun}
6t^3+4\kh{\abs{6|\mathbb S^{n-1}}^2}^{1/3}t-\frac{n}{n+2}|B^n|=0.
\end{equation}
\end{theorem}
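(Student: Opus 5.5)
We refine the calibration argument so that the deficit $\frac12|\mathbb S^n|-|\Graph_u\cap B^{n+1}|$ is written as a nonnegative integral, and then show that the divergence-free structure \eqref{divq} stops this integral from being small.

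\emph{Calibration step.} Extend $q$ to the vertical-invariant unit vector field $\nu=(-\nabla u,1)/\sqrt{1+|\nabla u|^2}$ on $\Omega\times\mathbb R$. By \eqref{MSE}, $\nu$ is divergence free. Its flux through $\Graph_u$ is exactly the area, because $\nu$ is the unit normal there. Let $E=\{x_{n+1}<u(x)\}\cap B^{n+1}$ be the subgraph in the ball, and let $S=\partial B^{n+1}\cap\{x_{n+1}<u\}$ be the canonical spherical filling, so that $\partial E=(\Graph_u\cap B^{n+1})\cup S$. The divergence theorem then gives
\begin{equation*}
|\Graph_u\cap B^{n+1}|=\int_S \langle \nu,N\rangle\,d\sigma ,
\end{equation*}
where $N$ is the outward unit normal of the sphere. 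Doing the same with the supergraph and averaging the two identities yields
\begin{equation*}
\frac12|\mathbb S^n|-|\Graph_u\cap B^{n+1}|=\frac12\int_{\mathbb S^n}\bigl(1-\langle\nu,\pm N\rangle\bigr)\,d\sigma ,
\end{equation*}
with the sign chosen according to the side of the graph. Each integrand is nonnegative, and this gives the deficit identity.

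\emph{Near-equality step.} Suppose the deficit is less than $\delta$. The integrand is then small in $L^1$, so $\nu$ is $L^1$-close to $N$ on the lower cap and close to $-N$ on the upper cap. Since $\nu$ does not depend on $x_{n+1}$, both conditions say something about $q$ at the same point $x\in B^n$. The points $(x,\pm\sqrt{1-|x|^2})$ lie above and below $x$, and near-equality essentially forces
\begin{equation*}
-q(x)\approx x \quad\text{and}\quad \sqrt{1-|q|^2}\approx \mp\sqrt{1-|x|^2}
\end{equation*}
on the parts of the caps that are actually used. Because the graph separates the two caps, at least a definite portion of each vertical line meets each filling. Using the coarea formula to pass from the sphere to the disk $B^n$, where $d\sigma=dx/\sqrt{1-|x|^2}\ge dx$, I would conclude that
\begin{equation*}
\int_{B^n}|q(x)+x|^2\,dx\lesssim \delta^{2/3},
\end{equation*}
with constants in terms of $|\mathbb S^{n-1}|$. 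The cube root comes from a height threshold $t$: one splits according to whether the graph height is within $t$ of the sphere, which produces the cubic \eqref{eq:def-taun} with $\delta=t^3$.

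\emph{Incompatibility step.} I expect this to be the main obstacle. The target field $-x$ has divergence $-n$, whereas $\operatorname{div}q=0$. Test against $\varphi(x)=1-|x|^2$, which vanishes on $\partial B^n$. Integration by parts gives
\begin{equation*}
0=\int_{B^n}\varphi\operatorname{div}q=-\int_{B^n}\nabla\varphi\cdot q=2\int_{B^n} x\cdot q .
\end{equation*}
On the other hand, $\int_{B^n} x\cdot(-x)\,dx=-\frac{n}{n+2}|B^n|$. Therefore, by Cauchy--Schwarz,
\begin{equation*}
\frac{n}{n+2}|B^n|=\Bigl|\int_{B^n}x\cdot(q+x)\Bigr|\le \int_{B^n}|q+x| ,
\end{equation*}
and the right side is bounded by the near-equality estimate in terms of $t$. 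Combining this with the threshold bookkeeping gives $6t^3+4(6|\mathbb S^{n-1}|^2)^{1/3}t\ge\frac{n}{n+2}|B^n|$, which forces $t\ge\tau_n$. So a deficit below $\tau_n^3$ is impossible, and this is \eqref{eq:main-gap}.

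The delicate part is tracking the constants in the near-equality step. One has to control regions where $|x|$ is close to $1$, where the vertical fibre is short, and regions where the graph leaves the ball; both are handled by the threshold $t$, at costs of $6t^3$ and $O(t)$ respectively.
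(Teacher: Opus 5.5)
Your averaged identity is a sound starting point once a sign is fixed. The near-equality step, however, carries all the quantitative content of the theorem, and it is both misstated and not carried out.

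\textbf{The sign and what the caps actually say.} For the subgraph $E$, the outward normal along the graph is $\nu$ itself, so the divergence theorem gives $|\Graph_u\cap B^{n+1}|=-\int_{S_-}\langle\nu,X\rangle$; your version is negative for $u\equiv0$. The correct identity is $\frac12|\mathbb S^n|-|\Sigma|=\frac12\int_{S_+}(1-\langle\nu,X\rangle)+\frac12\int_{S_-}(1+\langle\nu,X\rangle)$, with $S_\pm$ the parts of the sphere above and below the graph. Because $\nu_{n+1}>0$, your conclusions ``$\nu\approx N$ on the lower cap, $\nu\approx -N$ on the upper cap'' and $\sqrt{1-|q|^2}\approx\mp\sqrt{1-|x|^2}$ are impossible. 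Near-equality actually gives $\nu\approx X$ on $S_+\cap\{z>0\}$, so $q\approx -x$ there, and $\nu\approx -X$ on $S_-\cap\{z<0\}$, so $q\approx +x$ there. The two caps therefore carry \emph{opposite} horizontal information, not $-q\approx x$ from both.

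\textbf{The missing mechanism.} Nothing in your sketch shows that these favourable sets cover most of the two hemispheres. ``The graph separates the two caps'' does not do it: for $u\equiv c$ with $c$ close to $1$, almost the whole upper cap lies in $S_-$. What is needed is the wrong-hemisphere penalty $1-\langle\nu,\pm X\rangle\ge 1-\sqrt{1-z^2}\ge z^2/2$ on $S_+\cap\{z<0\}$, resp.\ $S_-\cap\{z>0\}$. Combine this with the slab bound $|\mathbb S^n\cap\{-a<z<0\}|\le|\mathbb S^{n-1}|a$ and a layer-cake integration. This gives $m^3\le 6|\mathbb S^{n-1}|^2\cdot(\text{that part of the deficit})$ for the measure $m$ of the wrong-hemisphere set. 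That is the source of the cube root and of the constant $6|\mathbb S^{n-1}|^2$ in \eqref{eq:def-taun}; your ``height threshold'' is a different device and is never executed.

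\textbf{The wrong order and the asserted cubic.} Your bound $\int_{B^n}|q+x|^2\lesssim\delta^{2/3}$ is wrong. On the uncovered part of the disc, which has measure of order $D^{1/3}$ with $D=\frac12|\mathbb S^n|-|\Sigma|$, one only has $|q+x|\le 2$, so the correct order is $D^{1/3}$. The cubic in \eqref{eq:def-taun} is exactly the comparison of the $L^2$ bound $\int_{B^n}|q+x|^2\le 6D+4(6|\mathbb S^{n-1}|^2D)^{1/3}$ with the $L^2$ lower bound $\int_{B^n}|q+x|^2\ge\frac{n}{n+2}|B^n|$. You assert the cubic rather than derive it, and your $L^1$ lower bound combined with a $\delta^{2/3}$ upper bound would not produce it. The incompatibility step you flag as the main obstacle is in fact the easy two-line part.

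\textbf{How to complete your averaging.} Done correctly, your averaging even beats the paper's constant. Put $m_1=|S_-\cap\{z>0\}|$ and $m_2=|S_+\cap\{z<0\}|$, and let $P_+,P_-\subset B^n$ be the vertical projections of $S_+\cap\{z>0\}$ and $S_-\cap\{z<0\}$. The corrected identity, $d\sigma\ge dx$ and the penalty give
$\int_{P_+}|q+x|^2+\int_{P_-}|q-x|^2\le 4D$, $m_1^3+m_2^3\le 12|\mathbb S^{n-1}|^2D$, $|B^n\setminus P_+|\le m_1$ and $|B^n\setminus P_-|\le m_2$.
Since $|q+x|^2+|q-x|^2\ge 2|x|^2$, split $\int_{B^n}|x|^2$ over $P_+\cap P_-$ and its complement, and use the power-mean inequality. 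This yields $\frac{n}{n+2}|B^n|\le 2D+m_1+m_2\le 2D+2(6|\mathbb S^{n-1}|^2D)^{1/3}$. Because $2t^3+2(6|\mathbb S^{n-1}|^2)^{1/3}t<6t^3+4(6|\mathbb S^{n-1}|^2)^{1/3}t$ for $t>0$, this forces $D>\tau_n^3$.

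This version plays the two caps against each other using only the $z$-independence of $\nu$, and needs no test-function argument. The paper instead keeps only the lighter of the two fillings, gets information on one cap only, and then must invoke $\operatorname{div}q=0$ against $\operatorname{div}(\pm x)=\pm n$.
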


The mechanism behind the proof is simple in principle. Using the subgraph of $u$ inside the unit ball, we construct two canonical spherical fillings of the boundary of $\Graph_u\cap B^{n+1}$. At least one of them has volume at most $\frac12\abs{\mathbb S^n}$ and has a single spherical orientation. If the calibration estimate were nearly an equality, then the upward unit normal of the graph would have to be close, on most of the relevant spherical region, to either the radial field
$X$ or $-X$, where
\begin{equation}\label{xz}
X=(x_1,\cdots, x_n, x_{n+1}):=(x,z), \quad z:=x_{n+1}
\end{equation}
is the position vector. After projection to $B^n$, this forces $q$ to be close in $L^2$ to either $-x$ or $x$. This is impossible beyond a definite quantitative threshold because
\begin{equation}
 \operatorname{div}q=0,\quad \operatorname{div}(\pm x)=\pm n. \label{divqdivx}
\end{equation}
   
This incompatibility is the source of the positive gap in
\eqref{eq:main-gap}. Notice that the argument does not require the intersection of the graph with the unit sphere to be connected or transverse.

The explicit constant $\delta_n$ above is not intended to be optimal. In Appendix~A we refine one step of the argument and obtain a larger explicit constant $\tilde{\delta}_n>\delta_n$. The natural remaining problem is to determine the sharp upper volume. Let $\mathscr M_n$ denote the class of all pairs $(\Omega,u)$ satisfying the assumptions of Theorem \ref{thm:main},
and define
\begin{equation}\label{eq:def-Lambda}
 \Lambda_n:=\sup_{(\Omega,u)\in\mathscr M_n}
  \abs{\Graph_u\cap B^{n+1}}.
\end{equation}
It is therefore natural to ask for the exact value of $\Lambda_n$, whether the supremum is attained by a
smooth minimal graph, and, if it is not attained, what geometric degenerations can occur along maximizing sequences. These questions form a natural sharp extremal problem for the minimal surface equation.
 
\begin{rem}
The flat graph already gives the trivial lower bound $\Lambda_n\ge |B^n|.$ The cylindrical Scherk example in Example \ref{ex:cylindrical-scherk} improves this to an explicit strict lower bound.  Combining Theorem \ref{thm:main}, Proposition \ref{prop:improved-gap} and Example \ref{ex:cylindrical-scherk}, we have the following estimate:
\begin{equation*}
|B^n|+c_n\leq \Lambda_n \leq \frac12|\mathbb S^n|-\tilde{\delta}_n,
\end{equation*}
where $\tilde{\delta}_n$ and $c_n$ are positive constants depending only on $n$ defined in Appendix.
\end{rem}

\vspace{1cm}
\section{Proof of Theorem \ref{thm:main}}
\begin{proof}[Proof of Theorem \ref{thm:main}]
Note that the intersection $\Graph_u\cap \mathbb S^n$ need not be transverse and its induced spherical filling need not have a smooth boundary in the classical sense. Hence a purely smooth application of Stokes' theorem would require additional generic assumptions or an approximation argument.  We will use integral currents to avoid imposing unnecessary regularity assumptions on the spherical intersection. By working with the currents associated with the relevant finite-perimeter sets, the boundary identities, the orientations of the spherical fillings, and their masses are all well defined up to $\mathcal H^n$-null sets. Thus the current language is used only as a technical device to make the calibration argument valid for arbitrary minimal graphs defined over a neighborhood of $B^n$.

Put $\Sigma:=\Graph_u\cap B^{n+1},$
and orient $\Sigma$ by its upward unit normal
\begin{equation*}
  N=\frac{1}{\sqrt{1+\abs{\nabla u}^2}}(-u_{x_1},\cdots, -u_{x_n}, 1)=(-q,s),
    \quad
    q=\frac{\nabla u}{\sqrt{1+|\nabla u|^2}},
    \quad
    s:=\frac{1}{\sqrt{1+|\nabla u|^2}}.
\end{equation*}
Let $(x_1, \cdots, x_n, x_{n+1})$ be the standard coordinates of $\mathbb R^{n+1}$ and we use the notation  $X, x, z$ introduced in   \eqref{xz}. We extend $N$ to $\Omega\times\mathbb R$ by making it independent of $z$. Since by \eqref{divq}
\begin{equation*}
\operatorname{div}_{\mathbb R^{n+1}}N=-\operatorname{div}_{\mathbb R^n}q=0,
\end{equation*}
the $n$-form $\omega:=\iota_N\kh{dx_1\wedge\cdots\wedge dx_n\wedge dz}$ is closed. Moreover, $\omega$ has comass one and calibrates $\Sigma$, so that
\begin{equation}\label{eq:calibration-on-Sigma}
    \omega|_\Sigma=dV_\Sigma.
\end{equation}

We first construct a spherical filling having a single orientation.

Consider the set
\begin{equation*}
    \mathcal E:=\dkh{ (x,z)\in B^{n+1}: \ z<u(x)}.
\end{equation*}

On $\mathbb S^n$, set
\begin{equation*}
    S_- :=\dkh{(x,z)\in \mathbb S^n:\ z<u(x)}, \quad S_+:= \mathbb S^n \setminus S_-,
\end{equation*}
where sets differing by a set of $\mathcal H^n$-measure zero are identified. More precisely, all spherical sets are understood up to $\mathcal H^n$-null sets. This convention is harmless, since the associated currents, masses, and integrals of smooth $n$-forms are unchanged by modifying the sets on $\mathcal H^n$-null subsets.

We use the standard notation $\llbracket M\rrbracket$ for the
current of integration over an oriented rectifiable set $M$. If $M$ is an oriented smooth manifold, then $\llbracket M\rrbracket$ is the current of integration over $M$.
The mass of a $k$-current $T$ is defined by
\begin{equation*}
    \mathbf M(T) :=\sup\dkh{T(\omega): \omega\in C^\infty_0(\Lambda^k\mathbb{R}^{n+1}), \|\omega\|_*\le 1},
\end{equation*}
where $\|\omega\|_*$ denotes the comass norm. In particular, if $M$ is a smooth oriented $k$-dimensional submanifold with multiplicity one, then $\mathbf M(\llbracket M\rrbracket)=\mathcal H^k(M).$  The boundary operator for currents is characterized by Stokes' formula: $\partial T(\eta)=T(d\eta).$ Thus, in the smooth case, $\partial\llbracket M\rrbracket=\llbracket\partial M\rrbracket.$

We orient $\mathbb S^n$ by its outward unit normal $X$, where $X=(x,z)$ denotes the position vector. Up to an $\mathcal H^n$-null set, the reduced boundary of $\mathcal{E}$ consists of the graphical part $\Sigma$ and the spherical part $S_-$. The Gauss--Green formula for sets of finite perimeter (see \cite[Theorem 4.5.6]{Fed69}) gives, with the above orientations,
\begin{equation}\label{eq:boundary-E}
\partial\llbracket\mathcal E\rrbracket= \llbracket\Sigma\rrbracket+\llbracket S_-\rrbracket.
\end{equation}
Consequently,
\begin{equation*}
    \partial\llbracket\Sigma\rrbracket=-\partial\llbracket S_-\rrbracket=\partial\llbracket S_+\rrbracket.
\end{equation*}
It follows that both
\begin{equation*}
T_-:=-\llbracket S_-\rrbracket\quad\text{and}
\quad 
T_+:=\llbracket S_+\rrbracket
\end{equation*}
have boundary $\partial\llbracket\Sigma\rrbracket$, and  masses are $ \mathbf M(T_-)=\mathcal H^n(S_-),
    \  \mathbf M(T_+)=\mathcal H^n(S_+).$
Hence
\begin{equation*}
    \mathbf M(T_-)+\mathbf M(T_+)=
    \mathcal H^n(S_-)+\mathcal H^n(S_+)=\mathcal H^n(\mathbb S^n).
\end{equation*}
We may therefore choose one of these two currents, denoted by $T$, such that
\begin{equation}\label{eq:T-mass-half}
\partial T=\partial\llbracket\Sigma\rrbracket,
    \quad
\mathbf M(T)\leq\frac12\mathcal H^n(\mathbb S^n).
\end{equation}
More precisely, there exist a measurable region
$S\subset\mathbb S^n$ and a constant
$\sigma\in \dkh{1,-1}$ such that $T=\sigma\llbracket S\rrbracket$ and the unit normal determining the orientation of $T$ is $\nu_T=\sigma X$ a.e. on $S$.

Throughout the paper, $|M|$ denotes the Hausdorff measure of the dimension appropriate to $M$. In particular,
\begin{equation*}
|B^m|=\mathcal L^m(B^m), \quad |\mathbb S^m|=\mathcal H^m(\mathbb S^m).
\end{equation*}


Since $d\omega=0$, we have
\begin{equation*}
    0=\llbracket E\rrbracket(d\omega) =\partial\llbracket E\rrbracket(\omega).
\end{equation*}
Using \eqref{eq:boundary-E}, this gives
\begin{equation*}
\int_\Sigma\omega =  -\int_{S_-}\omega  =  \int_{T_-}\omega. 
\end{equation*}
On the other hand,
\begin{equation*}
\llbracket S_-\rrbracket+\llbracket S_+\rrbracket = \llbracket \mathbb S^n\rrbracket = \partial\llbracket B^{n+1}\rrbracket, 
\end{equation*}
and hence
\begin{equation*}
\int_{S_-}\omega+\int_{S_+}\omega =  \llbracket B^{n+1}\rrbracket(d\omega)  = 0. 
\end{equation*}
Therefore
\begin{equation*}
\int_\Sigma\omega = \int_{T_-}\omega = \int_{T_+}\omega. 
\end{equation*} 
Thus, for the chosen current $T$, we have
\begin{equation}\label{eq:calibration-filling}
\abs{\Sigma}=\int_\Sigma\omega =\int_T\omega =\int_S\langle N,\sigma X\rangle\,dV\le \int_S |N|  \ |X|dV =|S|.
\end{equation}
Therefore, by \eqref{eq:T-mass-half} and \eqref{eq:calibration-filling}, we have 
\begin{equation}\label{eq:E-D-eta}
|\Sigma| \le |S| \le \frac12 \abs{\mathbb S^n}.
\end{equation}

%

We first consider the case $\sigma=1$. In this case, by \eqref{eq:calibration-filling},
\begin{equation}\label{eq:D-def}
\abs{S}-\abs{\Sigma}=\int_S\kh{1-\jkh{N, X}}dV.
\end{equation}
Since the equator $\mathbb S^n\cap\{z=0\}\) has
$\mathcal H^n$-measure zero, we have
\begin{equation}\label{Sdecomp}
    |S|=|S\cap \dkh{z>0}|+|S\cap \dkh{z<0}|.
\end{equation}
Moreover,
\begin{align*}
    \mathbb S^n \cap \dkh{z>0}=&(( \mathbb S^n \cap \dkh{z>0}) \cap S)\,\cup\, ((\mathbb S^n \cap \dkh{z>0})\setminus S)
=(S\cap\dkh{z>0})\,\cup\, ((\mathbb S^n \cap \dkh{z>0})\setminus S),
\end{align*}
and these two sets are disjoint. Hence
\begin{equation*}
    |S\cap \dkh{z>0}|=\abs{ \mathbb S^n \cap \dkh{z>0}}-\abs{(\mathbb S^n \cap \dkh{z>0})\setminus S}.
\end{equation*}
Therefore, by \eqref{Sdecomp}, we have
\begin{equation}\label{Sdecomp-volume}
\begin{aligned}
|S|=&\abs{ \mathbb S^n \cap \dkh{z>0}}-\abs{(\mathbb S^n \cap \dkh{z>0})\setminus S}+\abs{S\cap\dkh{z<0}}
\\
=&\frac12\abs{\mathbb S^n}-\abs{(\mathbb S^n \cap \dkh{z>0})\setminus S}+\abs{S\cap\dkh{z<0}}.
\end{aligned}
\end{equation}
Inequality \eqref{eq:E-D-eta} gives
\begin{equation}\label{m-l}
\abs{(\mathbb S^n \cap \dkh{z>0})\setminus S}-\abs{S\cap\dkh{z<0}}=\frac12\abs{\mathbb S^n}-|S|\ge0.
\end{equation}

Since $\sigma=1$, near equality of \eqref{eq:calibration-filling} would force $N$ to be close to the position vector $X$. As $N_{n+1}>0$, the unfavorable part of $S$ is its intersection with the lower hemisphere $\mathbb S^n\cap \dkh{z<0}$. We now estimate this part. Let $X=(x,z)\in \mathbb S^n\cap \dkh{z<0}$. Among all unit vectors $Y\in\mathbb R^{n+1}$ satisfying $Y_{n+1}\geq0$, one has
\begin{equation*}
\jkh{X, Y}=\sum_{i=1}^n Y_i x_i +Y_{n+1}z\le \sum_{i=1}^n Y_i x_i\le \sqrt{\sum_{i=1}^n x_i^2}=\sqrt{1-z^2}.
\end{equation*}
Since the last component of $N$ is positive, it follows that
\begin{equation*}
1-\jkh{N,X} \geq 1-\sqrt{1-z^2} \geq \frac{z^2}{2},
    \quad \text{for}\ \ X=(x,z)\in \mathbb S^n\cap \dkh{z<0}.
\end{equation*}
Consequently, by \eqref{eq:D-def},
\begin{equation}\label{eq:D-lower-z}
    |S|-|\Sigma|\geq \frac12 \int_{S\cap \dkh{z<0}}z^2\,dV.
\end{equation}

We use the following spherical parametrization on $\mathbb S^n$:
\begin{equation*}
\Phi(\theta,z)=\kh{\sqrt{1-z^2}\,\theta, z}, \quad \theta\in\mathbb S^{n-1},\quad -1<z<1.
\end{equation*}
Writing $r(z)=\sqrt{1-z^2}$, the induced metric is
\begin{equation*}
g_{\mathbb S^n}=r(z)^2 g_{\mathbb S^{n-1}}+\kh{1+r'(z)^2}\,dz^2 =(1-z^2)g_{\mathbb{S}^{n-1}} + \frac{1}{1-z^2} dz^2.
\end{equation*}
Hence in this parametrization, we have
\begin{equation}\label{dVSn}
dV_{\mathbb S^n} = (1-z^2)^{\frac{n-2}{2}}\,d\theta\,dz.
\end{equation}
Thus, when integrating over a measurable subset of \(\mathbb S^n\),
we always mean the restriction of this spherical measure. Using \eqref{dVSn},   for $0<a<1$, 
\begin{equation}\label{eq:slab-estimate}
\abs{ \mathbb S^n\cap \dkh{-a<z<0}} = \abs{\mathbb{S}^{n-1}}\int_0^a (1-t^2)^{\frac{n-2}{2}}\,dt \leq \abs{\mathbb{S}^{n-1}} a.
\end{equation}
It follows that, for $0<t<1$,
\begin{equation*}
\abs{S\cap \dkh{z<-t}} =\abs{S\cap \dkh{z<0}}-\abs{S\cap \dkh{-t < z <0}} \geq \max\dkh{\abs{S\cap\dkh{z<0}}-\abs{\mathbb{S}^{n-1}}t, \ \ 0}.
\end{equation*}
Since $0\leq |z|\leq1$ on $S\cap \dkh{z<0}$, Tonelli's theorem \cite[Theorem 2.37(a)]{Fol99} gives
\begin{equation}
\begin{aligned}
\frac12\int_{S\cap \dkh{z<0}}z^2\,dV=&\int_{S\cap \dkh{z<0}} \kh{\int_0^{|z|} t dt} dV\\
=& \int_{S\cap\dkh{z<0}}
    \int_0^1 t\,\mathbf 1_{\{|z|>t\}}\,dt\,dV  
\\               
=&\int_0^1 t \kh{\int_{S\cap\dkh{z<0}} \mathbf{1}_{\dkh{|z|>t}} dV  }dt
\\
=&\int_0^1 t\, \abs{S\cap \dkh{z<-t}}\,dt.
\end{aligned}\label{eq:Tonelli}
\end{equation}
Note that 
\begin{equation*}
\frac12\abs{\mathbb S^n} =\abs{\mathbb S^{n-1}} \int_0^1 (1-t^2)^{\frac{n-2}{2}} dt \leq \abs{\mathbb S^{n-1}}.
\end{equation*}
Thus, \eqref{eq:E-D-eta},  \eqref{eq:slab-estimate} and \eqref{eq:Tonelli} give 
\begin{equation}\label{eq:ell-cubic}
\begin{aligned}
\frac12 \int_{S\cap \dkh{z<0}} z^2 dV 
\geq&
 \int_0^{\frac{\abs{S\cap\dkh{z<0}}}{|\mathbb{S}^{n-1}|}} t\, \abs{S\cap \dkh{z<-t}}\,dt
\\
=& \int_0^{\frac{\abs{S\cap\dkh{z<0}}}{|\mathbb{S}^{n-1}|}} t\, \kh{\abs{S\cap \dkh{z<0}}-\abs{S\cap\dkh{-t<z<0}}}\,dt
\\
\ge& \int_0^{\frac{\abs{S\cap\dkh{z<0}}}{|\mathbb{S}^{n-1}|}} t\, \kh{\abs{S\cap \dkh{z<0}}-t \abs{\mathbb S^{n-1}}}\,dt
\\
=&   \frac{\abs{S\cap \dkh{z<0}}^3}{6\abs{\mathbb S^{n-1}}^2}.
\end{aligned}
\end{equation}
 
Combining \eqref{eq:D-lower-z}, \eqref{eq:ell-cubic}, and
\eqref{eq:E-D-eta}, we find
\begin{equation}\label{eq:ell-bound}
\abs{S\cap \dkh{z<0}}\leq \kh{6\abs{\mathbb S^{n-1}}^2 \kh{\abs{S}-\abs{\Sigma}}}^{1/3} \leq \kh{6\abs{\mathbb S^{n-1}}^2 \kh{\frac12 \abs{\mathbb S^n}-\abs{\Sigma}}}^{1/3} 
\end{equation}
Thus, by \eqref{m-l} and \eqref{eq:ell-bound},
\begin{equation}\label{eq:m-bound}
    \abs{\kh{\mathbb S^n\cap\dkh{z>0}}\setminus S}=\abs{S\cap\dkh{z<0}} +\frac12\abs{\mathbb S^n}-\abs{S}\leq  \kh{6\abs{\mathbb S^{n-1}}^2 \kh{\frac12 \abs{\mathbb S^n}-\abs{\Sigma}}}^{1/3}  +\frac12\abs{\mathbb S^n} -\abs{\Sigma}.
\end{equation}

We next project the part of $S$ lying in the upper hemisphere onto $B^n$. Define
\begin{equation*}
P:=\dkh{x\in B^n: \kh{x,\sqrt{1-|x|^2}}\in S}.
\end{equation*}
Since the area element of the upper hemisphere satisfies
\begin{equation*}
dV_{\mathbb S^n}=\frac{dx}{\sqrt{1-|x|^2}} \geq dx,
\end{equation*}
we have
\begin{equation}\label{eq:missing-projection}
    |B^n\setminus P|=\int_{B^n\setminus P} 1 dx \leq \int_{B^n\setminus P} \frac{1}{\sqrt{1-\abs{x}^2}} dx =   \abs{\kh{\mathbb S^n\cap\dkh{z>0}}\setminus S}
\end{equation}

For $X=\kh{x, \sqrt{1-\abs{x}^2}}\in S\cap \dkh{z>0}$,
we have
\begin{equation*}
\abs{q+x}^2\leq \abs{-q-x}^2 +(s-z)^2=\abs{(-q, s)-(x, z)}^2=\abs{N-X}^2=2\kh{1-\jkh{N, X}}.
\end{equation*}
Therefore, by \eqref{eq:D-def},
\begin{equation}\label{eq:q-plus-x-on-P}
\int_P|q+x|^2\,dx \leq 2\int_{S\cap \dkh{z>0}}\kh{1-\jkh{N, X}} \,dV                              
\leq 2\kh{\abs{S}-\abs{\Sigma}}.
\end{equation}
On $B^n\setminus P$, we use
\begin{equation}
    |q+x| \leq |q|+|x| \leq 2. \label{q+x}
\end{equation}
By \eqref{eq:missing-projection} and \eqref{eq:m-bound},
\begin{equation}
\begin{aligned}
\int_{B^n}|q+x|^2\,dx =&\int_P |q+x|^2 \, dx+ \int_{B^n\setminus P} |q+x|^2\\
\leq &2\kh{\abs{S}-\abs{\Sigma}} +4\abs{B^n\setminus P}\\
\leq &2\kh{\abs{S}-\abs{\Sigma}} +4\abs{\mathbb S^n\cap \dkh{z>0}\setminus S}\\
\leq & 2\kh{\abs{S}-\abs{\Sigma}} +4\zkh{\kh{6\abs{\mathbb S^{n-1}}^2 \kh{\frac12 \abs{\mathbb S^n}-\abs{\Sigma}}}^{1/3}  +\frac12\abs{\mathbb S^n} -\abs{\Sigma}}\\
\leq & 4\kh{6\abs{\mathbb S^{n-1}}^2 \kh{\frac12 \abs{\mathbb S^n}-\abs{\Sigma}}}^{1/3} +6\kh{\frac12\abs{\mathbb S^n} -\abs{\Sigma}}.
\end{aligned}
\label{eq:q-plus-x-upper}
\end{equation}

We now derive a lower bound using the divergence-free condition \eqref{divq}.  Let $\phi(x):=\frac{1-|x|^2}{2n}.$ Then $\phi=0$ on $\mathbb S^{n-1}$. Using \eqref{divqdivx}, integration by parts gives
\begin{equation*}
n\int_{B^n}\phi\,dx =\int_{B^n} \operatorname{div} (q+x) \phi \, dx= -\int_{B^n} (q+x)\cdot\nabla\phi\,dx.
\end{equation*}
Hence, by the Cauchy--Schwarz inequality,
\begin{equation}\label{eq:div-cauchy}
\kh{n\int_{B^n}\phi\,dx}^2 \leq\kh{\int_{B^n}|q+x|^2\,dx}
\kh{\int_{B^n}|\nabla\phi|^2\,dx}.
\end{equation}
A direct computation yields
\begin{equation*}
 \int_{B^n}\phi\,dx= \int_{B^n}|\nabla\phi|^2\,dx
=\frac{\abs{B^n}}{n(n+2)}.
\end{equation*}
Therefore, by \eqref{eq:div-cauchy},
\begin{equation}\label{eq:q-plus-x-lower}
\int_{B^n}|q+x|^2\,dx\geq\frac{n \abs{B^n}}{n+2}.
\end{equation}
Combining \eqref{eq:q-plus-x-upper} and
\eqref{eq:q-plus-x-lower}, we obtain
\begin{equation}\label{eq:E-inequality}
    \frac{n \abs{B^n}}{n+2} \leq  4\kh{6\abs{\mathbb S^{n-1}}^2 \kh{\frac12 \abs{\mathbb S^n}-\abs{\Sigma}}}^{1/3} +6\kh{\frac12\abs{\mathbb S^n} -\abs{\Sigma}} .
\end{equation}
Set $t=\kh{\frac12\abs{\mathbb S^n} -\abs{\Sigma}}^{1/3}$. Since the function
\begin{equation*}
t\longmapsto 6t^3+4\kh{6\abs{\mathbb S^{n-1}}^2}^{1/3}t
\end{equation*}
is strictly increasing on $[0,\infty)$, equations
\eqref{eq:def-taun} and \eqref{eq:E-inequality} imply
$t\geq\tau_n.$ Consequently, 
\begin{equation*}
\abs{\Sigma} \leq \frac12 \abs{\mathbb S^n} -\tau_n^3.
\end{equation*}

It remains to consider the case $\sigma=-1$. In this case,
the lower hemisphere is now the favorable hemisphere. Repeating the preceding argument with $\mathbb S^n \cap \dkh{z>0}$ and $\mathbb S^n \cap \dkh{z<0}$ interchanged gives
\begin{equation*}
\int_{B^n}|q-x|^2\,dx \leq 6\kh{\frac12\abs{\mathbb S^n}-\abs{\Sigma}}+4\kh{6\abs{\mathbb S^{n-1}}^2 \kh{\frac12\abs{\mathbb S^n}-\abs{\Sigma}}}^{1/3}.
\end{equation*}
Using the same test function $\phi$ and taking absolute values in the  integration-by-parts identity yields
\begin{equation*}
\int_{B^n}|q-x|^2\,dx \geq \frac{n }{n+2} \abs{B^n}.
\end{equation*}
Thus \eqref{eq:E-inequality} holds in this case as well, and again
\begin{equation*}
\frac12\abs{\mathbb S^n} -\abs{\Sigma} \geq \tau_n^3.
\end{equation*}
This proves \eqref{eq:main-gap}.
\end{proof}

\vspace{2cm}
\section*{Appendix A. An improved explicit gap and Scherk examples}
\setcounter{section}{1}
\setcounter{theorem}{0}
\renewcommand{\thesection}{A}
\renewcommand{\thetheorem}{A.\arabic{theorem}}
\subsection{An improved explicit gap}
The explicit constant obtained  in Theorem~\ref{thm:main} can be improved by using the spherical area element more efficiently on the missing part of the projection. 

\begin{prop}
\label{prop:improved-gap}
Under the assumptions of Theorem~\ref{thm:main}, one has
\begin{equation}\label{eq:improved-gap}
\abs{ \Graph_u\cap B^{n+1}} \leq \frac12\abs{\mathbb S^n}-\widetilde\delta_n,
\end{equation}
where $\tilde\delta_n=\tilde\tau_n^3>\delta_n$, and $\tilde\tau_n$ is the unique positive root of
\begin{equation}\label{eq:def-tilde-taun}
\kh{2+\frac{25\sqrt5}{27}}  t^3+ \frac{25\sqrt5}{27}\kh{6\abs{\mathbb S^{n-1}}^2}^{1/3} t -\frac{n}{n+2}\abs{B^n}=0,
\end{equation}
\end{prop}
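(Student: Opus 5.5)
The plan is to repeat the proof of Theorem~\ref{thm:main} verbatim up to \eqref{eq:m-bound}, and to change only how the integral of $|q+x|^2$ over the missing set $B^n\setminus P$ is bounded. In \eqref{eq:q-plus-x-upper} two crude steps were combined. First, $|q+x|^2\le 4$ was used pointwise. Second, $dx\le dV_{\mathbb S^n}$ was used via \eqref{eq:missing-projection}. Both losses occur at different places: the first is sharp only near $|x|=1$, and the second only near $x=0$. So we should estimate the product of the two weights jointly.

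\textbf{Step 1: refine the bound on $B^n\setminus P$.} On this set we use $|q+x|\le |q|+|x|\le 1+|x|$ instead of \eqref{q+x}. We also write $dx=\sqrt{1-|x|^2}\,dV_{\mathbb S^n}$ for the upper hemisphere parametrized over $B^n$. This gives
\begin{equation*}
\int_{B^n\setminus P}|q+x|^2\,dx\le \int_{(\mathbb S^n\cap\{z>0\})\setminus S} f(|x|)\,dV,
\qquad f(r):=(1+r)^2\sqrt{1-r^2}=(1+r)^{5/2}(1-r)^{1/2}.
\end{equation*}

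\textbf{Step 2: maximize the weight.} The logarithmic derivative of $f$ is $\frac{5}{2(1+r)}-\frac{1}{2(1-r)}$. It vanishes at $r=2/3$, so
\begin{equation*}
\max_{[0,1]}f=f(2/3)=\kh{\tfrac53}^{5/2}\kh{\tfrac13}^{1/2}=\frac{5^{5/2}}{27}=\frac{25\sqrt5}{27}=:C.
\end{equation*}
Hence $\int_{B^n\setminus P}|q+x|^2\,dx\le C\,\abs{(\mathbb S^n\cap\{z>0\})\setminus S}$.

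\textbf{Step 3: assemble the upper bound.} We insert \eqref{eq:m-bound} and \eqref{eq:q-plus-x-on-P}, and use $|S|-|\Sigma|\le \frac12|\mathbb S^n|-|\Sigma|=:t^3$. This yields
\begin{equation*}
\int_{B^n}|q+x|^2\,dx\le (2+C)\,t^3+C\kh{6|\mathbb S^{n-1}|^2}^{1/3}t .
\end{equation*}
Comparing with the lower bound \eqref{eq:q-plus-x-lower} gives exactly the left-hand side of \eqref{eq:def-tilde-taun} being $\ge 0$. That left-hand side is strictly increasing in $t\ge0$, is negative at $t=0$, and tends to $+\infty$. So it has a unique positive root $\tilde\tau_n$, and $t\ge\tilde\tau_n$ follows.

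\textbf{Step 4: the case $\sigma=-1$.} Here the hemispheres are interchanged. On the lower hemisphere, $dx=\sqrt{1-|x|^2}\,dV$ still holds, and $|q-x|\le 1+|x|$. So the identical bound applies to $\int|q-x|^2$, and it combines with the same lower bound as before.

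\textbf{Step 5: compare with $\delta_n$.} Since $C\approx 2.07<4$ and $2+C<6$, the polynomial in \eqref{eq:def-tilde-taun} is strictly smaller than the one in \eqref{eq:def-taun} at every $t>0$. At $t=\tau_n$ the latter vanishes, so the former is negative there. Its root therefore satisfies $\tilde\tau_n>\tau_n$, which gives $\tilde\delta_n>\delta_n$.

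The only genuinely new point is Step 1: one must keep the factor $\sqrt{1-|x|^2}$ from the change of variables rather than discarding it. This is what makes the product weight bounded by $25\sqrt5/27$ instead of $4$. Everything else is bookkeeping already done in the proof of Theorem~\ref{thm:main}.
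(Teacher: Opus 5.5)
Your proposal is correct and follows the paper's proof essentially step for step. Like the paper, you replace $|q+x|\le 2$ by $|q+x|\le 1+|x|$ and keep the factor $\sqrt{1-|x|^2}$ from the change of variables, so that $\int_{B^n\setminus P}|q+x|^2\,dx\le \frac{25\sqrt5}{27}\abs{(\mathbb S^n\cap\{z>0\})\setminus S}$; you then combine this with \eqref{eq:m-bound}, \eqref{eq:q-plus-x-on-P} and \eqref{eq:q-plus-x-lower}, and compare the two cubics exactly as the paper does. Your explicit check that $(1+r)^2\sqrt{1-r^2}$ is maximized at $r=2/3$, and your remark on the case $\sigma=-1$, supply details that the paper only asserts or defers to the proof of Theorem~\ref{thm:main}.
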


\begin{proof}
We retain all the notation introduced in the proof of
Theorem~\ref{thm:main}.  The argument agrees with the proof of Theorem \ref{thm:main} up to \eqref{q+x}. We modify the inequality \eqref{q+x} as follows:
\begin{equation*}
|q+x| \leq |q| +|x| \leq 1+|x|.
\end{equation*}
Note that 
\begin{equation*}
(1+t)^2 \sqrt{1-t^2}\leq \frac{25\sqrt{5}}{27}, \quad \forall t\in [0,1].
\end{equation*}
Thus we have
\begin{equation}
\label{eq:Q-part-improved}
\int_{B^n\setminus P}|q+x|^2\,dx\leq \int_{B^n\setminus P}(1+|x|)^2\,dx=\int_{B^n\setminus P} (1+|x|)^2\sqrt{1-|x|^2} \frac{dx}{\sqrt{1-|x|^2}} \leq \frac{25\sqrt{5}}{27}\abs{\mathbb S^n\cap\dkh{z>0}\setminus S}.
\end{equation}
Combining \eqref{eq:D-def},
\eqref{eq:Q-part-improved}, and \eqref{eq:m-bound}, we obtain
\begin{equation}\label{improved-q+x}
\int_{B^n}|q+x|^2\,dx \leq \kh{2+\frac{25\sqrt{5}}{27}} \kh{\frac12 \abs{\mathbb S^n}-\abs{\Sigma}} +\frac{25\sqrt{5}}{27} \kh{6\abs{\mathbb S^{n-1}}^2 \kh{\frac12 \abs{\mathbb S^n}-\abs{\Sigma}} }^{1/3}.
\end{equation}
 
Let  $t=\kh{\frac12 \abs{\mathbb S^n}-\abs{\Sigma}}^{1/3}$. Combining \eqref{eq:q-plus-x-lower} and \eqref{improved-q+x}, we have 
\begin{equation*}
\kh{2+\frac{25\sqrt5}{27}}  t^3+ \frac{25\sqrt5}{27}\kh{6\abs{\mathbb S^{n-1}}^2}^{1/3} t \geq \frac{n}{n+2}\abs{B^n}.
\end{equation*}

Now we need to compare $\tau_n$ and $\tilde{\tau}_n$.
Since $\frac{25\sqrt5}{27}<4$ and   $2+\frac{25\sqrt5}{27}<6$, we have, for every $t>0$,
\begin{align*}
\kh{2+\frac{25\sqrt5}{27}} t^3+\frac{25\sqrt5}{27}\kh{6\abs{\mathbb S^{n-1}}^2}^{1/3}t< 6t^3+4\kh{6\abs{\mathbb S^{n-1}}^2}^{1/3}t.
\end{align*}
The two functions are strictly increasing on $[0,\infty)$.
Comparing their positive solutions at the common level
$n /(n+2)\abs{B^n}$, we obtain $\tilde\tau_n>\tau_n$.

The remainder of the proof is the same as the proof of Theorem \ref{thm:main}.
\end{proof}

\begin{rem}
\label{rem:improved-gap-dimension-two}
When $n=2$, $\tilde\tau_2$ is the unique positive solution of
\begin{equation*}
 \kh{2+\frac{25\sqrt5}{27}} t^3+\frac{25\sqrt5}{27} (24\pi^2)^{1/3} t-\frac{\pi}{2}=0.
\end{equation*}
Numerically, $\tilde\tau_2 \approx 0.1220406592$ and
\begin{equation*}
 \tilde\delta_2=\tilde\tau_2^3\approx 1.8176641\times10^{-3}
>  2.5491736\times10^{-4} \approx\delta_2.
\end{equation*}
\end{rem}

\subsection{A cylindrical Scherk example}

\begin{eg}
\label{ex:cylindrical-scherk}
Consider
\begin{equation*}
v(x_1, x_2):=2\log \frac{\cos(x_2/2)}{\cos(x_1/2)}, \quad (x_1, x_2)\in (-\pi, \pi)^2\supset \overline{B^2}.
\end{equation*}
Clearly, $v$ satisfies the minimal surface equation and $\Graph_v$ is a Scherk type surface. For $n\geq2$, define its cylindrical extension
\begin{equation*}
u(x_1,\cdots,x_n):=v(x_1,x_2), \quad (x_1,\cdots,x_n)\in (-\pi,\pi)^2\times\mathbb R^{n-2} \supset \overline{B^n}.
\end{equation*}
 We claim that there exists an explicit positive constant $\kappa_0$ such that
\begin{equation}\label{eq:fixed-scherk-radial-lower}
 |\Graph_v\cap B_r^3|\geq \pi r^2+\frac{\pi\kappa_0}{4}r^4, \quad 0<r\leq1.
\end{equation}
One may take
\begin{equation}\label{eq:kappa0-fixed}
\kappa_0:=\frac{\beta}{2}-\frac{\tan^2(1/2)}{2} -\frac{\beta\tan^2(1/2)}{8},
\quad \beta:=\frac{1}{\sqrt{1+2\tan^2(1/2)}+1}.
\end{equation}
Numerically, $ \kappa_0\approx 0.0551757.$

We now prove \eqref{eq:fixed-scherk-radial-lower}. For $\mu\geq2$, set 
\begin{equation}\label{def-vmu}
v_\mu(x_1,x_2):= \mu\log\frac{\cos(x_2/\mu)}{\cos(x_1/\mu)}
\end{equation}
and denote $A_\mu:=
    \abs{\Graph{v_\mu}\cap B^3}.$
We first establish the estimate
\begin{equation}\label{eq:scherk-unit-lower}
A_\mu\geq \pi \kh{1+\frac{\kappa_0}{\mu^2}},\quad \mu\geq2,
\end{equation}
where $\kappa_0$ is given by \eqref{eq:kappa0-fixed}.

Write
\begin{equation*}
 x_1=r\cos\theta, \quad x_2=r\sin\theta, \quad 0\leq r\leq1.
\end{equation*}
We first derive an upper bound for the height of the Scherk graph. Define
\begin{equation*}
F(s):=\log\cos\sqrt{s},\quad 0\leq s\leq\frac14.
\end{equation*}
For $s>0$,
\begin{equation*}
F'(s) = -\frac{\tan\sqrt{s}}{2\sqrt{s}},
\end{equation*}
with the limiting value $F'(0)=-1/2$. Moreover, the function $ t\longmapsto\frac{\tan t}{t}$
is increasing on $(0,\pi/2)$. Since $\mu\geq2$, we have $1/\mu\leq1/2$. Thus, by the mean value theorem,
\begin{equation*}
 \abs{v_\mu(r\cos\theta,r\sin\theta)}=\mu \abs{F\kh{\frac{r^2\sin^2\theta}{\mu^2}} - F\kh{\frac{r^2\cos^2\theta}{\mu^2}}}\leq\mu \frac{\mu}{2}\tan\frac1\mu\frac{r^2}{\mu^2}|\sin^2\theta-\cos^2\theta|=\frac12\tan\frac1\mu\,r^2|\cos2\theta|.
\end{equation*}
Since $t\mapsto \tan t/t$ is increasing and $1/\mu\leq1/2$, $\frac{\tan(1/\mu)}{1/\mu}\leq\frac{\tan(1/2)}{1/2},$ and hence
\begin{equation}\label{eq:scherk-height-bound}
\abs{ v_\mu(r\cos\theta,r\sin\theta)}\leq\frac{\tan(1/2)}{\mu} r^2|\cos2\theta|.
\end{equation}

Consider the planar region
\begin{equation}\label{eq:def-Dmu}
D_\mu:=\dkh{(r,\theta): 0\leq r^2\leq 1-\kh{\frac{\tan(1/2)}{\mu}}^2\cos^22\theta}.
\end{equation}
We claim that the graph of $v_\mu$ over $D_\mu$ is contained in $B^3$. Indeed, set
\begin{equation}\label{def-b}
    b:=\kh{\frac{\tan(1/2)}{\mu}}^2\cos^22\theta.
\end{equation}
Since $\mu\geq2$, we have $0\leq b<1$. If $r^2\leq1-b$, then \eqref{eq:scherk-height-bound} gives $v_\mu(r\cos\theta,r\sin\theta)^2 \leq br^4.$
Since the function $s\mapsto s+bs^2$ is increasing for
$s\geq0$,
\begin{equation*}
r^2+v_\mu^2 \leq r^2+br^4 \leq (1-b)+b(1-b)^2 =
    1-2b^2+b^3 \leq1.
\end{equation*}
Thus
\begin{equation}\label{eq:graph-Dmu-contained}
\dkh{(x_1,x_2,v_\mu(x_1,x_2)): (x_1,x_2)\in D_\mu} \subset
\Graph{v_\mu}\cap B^3.
\end{equation}

Next we estimate the area element from below.   Using $|\tan t|\geq |t|$ for $|t|<\pi/2$, we obtain
\begin{equation}\label{eq:scherk-gradient-lower}
  |\nabla v_\mu|^2 = \tan^2(x_1/\mu) +  \tan^2(x_2/\mu) \geq \frac{x_1^2+x_2^2}{\mu^2} =\frac{r^2}{\mu^2}.
\end{equation}
On the other hand, since $r\leq1$ and $\mu\geq2$, $ |\nabla v_\mu|^2 \leq  2\tan^2(1/2).$ Since $t\mapsto\sqrt{1+t}$ is concave on $[0,\infty)$, its graph lies above the chord joining the points $0$ and $2\tan^2(1/2)$. Therefore, for $0\leq t\leq 2\tan^2(1/2)$, we have $\sqrt{1+t} \geq 1+\beta t, $ where $\beta$ is defined in \eqref{eq:kappa0-fixed}. Combining this with \eqref{eq:scherk-gradient-lower}, we obtain
\begin{equation}\label{eq:scherk-area-element-lower}
  \sqrt{1+|\nabla v_\mu|^2}\geq1+\frac{\beta r^2}{\mu^2}\quad\text{on }D_\mu.
\end{equation}

By \eqref{eq:graph-Dmu-contained} and
\eqref{eq:scherk-area-element-lower},
\begin{equation}
    A_\mu\geq \int_{D_\mu}  \sqrt{1+|\nabla v_\mu|^2}\,dx_1dx_2 \geq  |D_\mu| + \frac{\beta}{\mu^2} \int_{D_\mu}r^2\,dx_1dx_2.
    \label{eq:A-mu-lower}
\end{equation} 

We now compute the two integrals explicitly. From
\eqref{eq:def-Dmu} and \eqref{def-b},
\begin{equation}\label{eq:Dmu-area}
 |D_\mu |= \int_0^{2\pi} \int_0^{\sqrt{1-b}}  \,r\,dr\,d\theta = \frac12 \int_0^{2\pi}
\kh{1-b} d\theta=\pi\zkh{1-\frac12  \kh{\frac{\tan(1/2)}{\mu}}^2}.
\end{equation}
Similarly,
\begin{equation}\label{eq:Dmu-r2}
 \int_{D_\mu}r^2\,dx_1dx_2=\int_0^{2\pi} \int_0^{\sqrt{1-b}}  \,r^3\,dr\,d\theta=\frac{\pi}{2}-\frac{\pi}{2} \kh{\frac{\tan(1/2)}{\mu}}^2+\frac{3\pi}{16}\kh{\frac{\tan(1/2)}{\mu}}^4.
\end{equation}
Substituting \eqref{eq:Dmu-area} and \eqref{eq:Dmu-r2} into \eqref{eq:A-mu-lower}, since $\mu\geq 2$, we obtain
\begin{align*}
\frac{A_\mu}{\pi}\geq &1+\frac{1}{\mu^2}\zkh{\frac{\beta}{2}-\frac{ \tan^2(1/2) }{2}-\frac{\beta  \tan^2(1/2) }{2\mu^2}+\frac{3\beta  \tan^4(1/2)}{16\mu^4}}\\
\geq& 1+\frac{1}{\mu^2}\zkh{\frac{\beta}{2}-\frac{ \tan^2(1/2) }{2}-\frac{\beta  \tan^2(1/2) }{2\mu^2}}\\
\geq& 1+\frac{1}{\mu^2}\zkh{\frac{\beta}{2}-\frac{ \tan^2(1/2) }{2}-\frac{\beta  \tan^2(1/2) }{8}}\\
=& 1+\frac{\kappa_0}{\mu^2},
\end{align*}
which proves \eqref{eq:scherk-unit-lower}.

For completeness, let us also verify that $\kappa_0>0$. Since $\sin(1/2)<\frac12, \ \cos(1/2)>1-\frac{(1/2)^2}{2}=\frac78, $ we have $\tan(1/2)<\frac47.$ Consequently, 
$
\sqrt{1+2\tan^2(1/2)} < \sqrt{1+\frac{32}{49}} = \frac97,
$ 
and therefore $\frac7{16}<\beta<\frac12.$ It follows that
\begin{equation*}
\kappa_0=\frac{\beta}{2}-\frac{\tan^2(1/2)}{2} -\frac{\beta\tan^2(1/2)}{8}>\frac{7}{32}-\frac{8}{49}-\frac{1}{49}=\frac{55}{1568}>0.
\end{equation*}
Numerically, $\kappa_0\approx0.0551757.$

We finally deduce \eqref{eq:fixed-scherk-radial-lower}. Recall that, by the definition \eqref{def-vmu}, $v=v_2$.
For $0<r\leq1$, let $\mu=2/r$. A direct computation gives $v_2(rx_1,rx_2)= r\,v_{2/r}(x_1,x_2).$ Hence dilation by the factor $1/r$ maps $\Graph_{v_2}\cap B_r^3$ onto $\Graph_{v_{2/r}}\cap B^3.$
 
Since two-dimensional area scales by the square of the dilation factor,
\begin{equation*}
\abs{ \Graph_v\cap B_r^3}= r^2 \abs{\Graph_{v_{2/r}}\cap B^3}
\end{equation*}
As $2/r\geq2$, estimate \eqref{eq:scherk-unit-lower} yields
\begin{equation*}
 \abs{\Graph_v\cap B_r^3} \geq  r^2\pi
    \kh{ 1+ \frac{\kappa_0}{(2/r)^2}} = \pi r^2 + \frac{\pi\kappa_0}{4}r^4.
\end{equation*}
This proves \eqref{eq:fixed-scherk-radial-lower}.
 
For $n=2$, the estimate follows directly from \eqref{eq:scherk-unit-lower} with $r=1$. Hence, in the slicing argument below, we assume $n\geq3$.
Now write $ y=(x_3,\ldots,x_n)\in\mathbb R^{n-2}.$  By slicing the cylindrical graph, we obtain
\begin{equation*}
 |\Graph_u\cap B^{n+1}| =\int_{B^{n-2}}\abs{\Graph_v \cap B^3_{\sqrt{1-|y|^2}}}\,dy \geq \pi \int_{B^{n-2}}
 (1-|y|^2)\,dy+ \frac{\pi\kappa_0}{4}\int_{B^{n-2}}(1-|y|^2)^2\,dy.
\end{equation*}
 
The first term equals $|B^n|$. Moreover,
\begin{equation*}
  \frac{\pi}{4} \int_{B^{n-2}} (1-|y|^2)^2\,dy= \frac{|B^n|}{n+2}.
\end{equation*}
Therefore
\begin{equation*}
 |\Graph_u\cap B^{n+1}|\geq |B^n|\kh{1+\frac{\kappa_0}{n+2}}=:|B^n|+c_n.
\end{equation*}
\end{eg}

\hspace{2cm}

\begin{thebibliography}{10}

 

\bibitem{AleOss75}
H.~Alexander and R.~Osserman, \emph{Area bounds for various classes of
  surfaces}, American Journal of Mathematics \textbf{97} (1975), 753--769.

\bibitem{All72}
William~K. Allard, \emph{On the first variation of a varifold}, Ann. of Math.
  (2) \textbf{95} (1972), 417--491. \MR{307015}

\bibitem{Bre12}
Simon Brendle, \emph{A sharp bound for the area of minimal surfaces in the unit
  ball}, Geom. Funct. Anal. \textbf{22} (2012), no.~3, 621--626. \MR{2972603}

\bibitem{BreHun17}
Simon Brendle and P.~K. Hung, \emph{Area bounds for minimal surfaces that pass
  through a prescribed point in a ball}, Geometric and Functional Analysis
  \textbf{27} (2017), 235--239.

\bibitem{CarSchWiy25}
Alessandro Carlotto, Mario~B. Schulz, and David Wiygul, \emph{Disc stackings
  and their {Morse} index}, Advanced Nonlinear Studies \textbf{25} (2025),
  no.~3, 756--806.

\bibitem{ColMin11}
Tobias~H. Colding and William~P. Minicozzi, II, \emph{A course in minimal
  surfaces}, Graduate Studies in Mathematics, vol. 121, American Mathematical
  Society, Providence, RI, 2011. \MR{2780140}

\bibitem{Fed69}
Herbert Federer, \emph{Geometric measure theory}, Die Grundlehren der
  mathematischen Wissenschaften, Band 153, Springer-Verlag New York, Inc., New
  York, 1969. \MR{257325}

\bibitem{Fol99}
Gerald~B. Folland, \emph{Real analysis: Modern techniques and their
  applications}, 2 ed., Pure and Applied Mathematics, John Wiley \& Sons, New
  York, 1999.

\bibitem{FraSch11}
Ailana Fraser and Richard Schoen, \emph{The first {S}teklov eigenvalue,
  conformal geometry, and minimal surfaces}, Adv. Math. \textbf{226} (2011),
  no.~5, 4011--4030. \MR{2770439}

\bibitem{Sim84}
Leon Simon, \emph{Lectures on geometric measure theory}, Centre for
  Mathematical Analysis, Australian National University, Canberra, 1984.

\bibitem{WanXiaZha25}
Guofang Wang, Chao Xia, and Xuwen Zhang, \emph{Monotonicity formulas for
  capillary surfaces}, Journal de Mathématiques Pures et Appliquées
  \textbf{204} (2025), 103802.
  \end{thebibliography}

\providecommand{\bysame}{\leavevmode\hbox to3em{\hrulefill}\thinspace}
\providecommand{\MR}{\relax\ifhmode\unskip\space\fi MR }
\providecommand{\MRhref}[2]{%
  \href{http://www.ams.org/mathscinet-getitem?mr=#1}{#2}
}
\providecommand{\href}[2]{#2}

\end{document}